\newtheorem{thm}{Theorem}[section]
\newtheorem{lem}[thm]{Lemma}
\theoremstyle{definition}
\theoremstyle{remark}
\newtheorem{rem}[thm]{Remark}
\definecolor{energy}{RGB}{114,0,172}
\definecolor{freq}{RGB}{45,177,93}
\definecolor{spin}{RGB}{251,0,29}
\definecolor{signal}{RGB}{203,23,206}
\definecolor{circle}{RGB}{217,86,16}
\definecolor{average}{RGB}{203,23,206}
\colorlet{shadecolor}{gray!20}
\pgfplotsset{compat=1.9}
\newcommand{\RN}[1]{%
  \textup{\uppercase\expandafter{\romannumeral#1}}%
}
\newcounter{x}
\def\ToRomanEmpire#1{\setcounter{x}{#1}\Roman{x}}
\numberwithin{equation}{section}
\author[Junfu Yao]{Junfu Yao}
\email{jyao21@jhu.edu}
\title{A Uniqueness Result for Self-expanders with Small Entropy}
\begin{document}
\begin{abstract}
In this short note, we prove a uniqueness result for small entropy self-expanders asymptotic to a fixed cone. This is a direct consequence of the mountain-pass theorem and the integer degree argument proved by J. Bernstein and L. Wang.

\end{abstract}
\maketitle

\section{Introduction}
\indent{A properly embedded $n$-dimensional submanifold $\Sigma$ in $\mathbb{R}^{n+1}$ is called a $\emph{self-expander}$ if it satisfies}
\begin{equation}
    \textbf{H}_\Sigma=\frac{\textbf{x}^{\perp}}{2}\label{1.1}
\end{equation}
where $\textbf{H}_\Sigma$ is the mean curvature vector of $\Sigma$, and $\textbf{x}^\perp$ is the normal component of the position vector. Self-expanders are self similar solutions of $\emph{mean curvature flow}$, that is, the family of hypersurfaces
\begin{equation*}
    \{\Sigma_t\}_{t>0}=\big\{\sqrt{t}\Sigma\big\}_{t>0}
\end{equation*}
satisfying
\begin{equation*}
    \Big(\frac{\partial \textbf{x}}{\partial t}\Big)^{\perp}=\textbf{H}_{\Sigma_t}
\end{equation*}
Self-expanders are important as they model the behavior of a mean curvature flow coming out of a conical singularity \cite{1}, and also model the long time behaviors of the flows starting from entire graphs \cite{8}.
\par\indent{For a hypersurface $\Sigma$ in $\mathbb{R}^{n+1}$, Colding-Minicozzi \cite{12} introduced the entropy on $\Sigma$}
\begin{equation}
    \lambda[\Sigma]=\sup\limits_{\textbf{y}\in\mathbb{R}^{n+1}, \rho>0}F[\rho\Sigma+\textbf{y}]
\end{equation}
where $F[\Sigma]$ is the $\emph{Gaussian surface area}$ of $\Sigma$
\begin{equation*}
    F[\Sigma]=(4\pi)^{-\frac{n}{2}}\int_{\Sigma}e^{-\frac{|\textbf{x}|^2}{4}}d\mathcal{H}^n(\textbf{x})
\end{equation*}
Obviously, this quantity is invariant under dilations and translations. And Huisken's monotonicity formula \cite{9} shows that this quantity is non-increasing along the mean curvature flow.
\par\indent{Next, we talk about the space $\mathcal{ACH}^{k,\alpha}_n$ introduced by J. Bernstein and L. Wang \cite{2}. A hypersurface $\Sigma\in \mathcal{ACH}^{k,\alpha}_n$, if it is a $C^{k,\alpha}$ properly embedded codimension-one submanifold and $C^{k,\alpha}_*$-asymptotic to a $C^{k,\alpha}$ regular cone $\mathcal{C}=\mathcal{C}(\Sigma)$. We refer to \cite[Section 2]{2} for technical details.}
\par{As pointed out in \cite{3} and \cite{5}, there may exist more than one self-expanders asymptotic to some specific cones. While the main theorem here states that for a small entropy cone, there's only one (stable) self-expander asymptotic to it. More precisely,}
\begin{thm}
There exists a constant $\delta=\delta(n)$, so that for a given $C^{k,\alpha}$-regular cone $\mathcal{C}$ with $\lambda[\mathcal{C}]<1+\delta$, there is a unique stable self-expander $\Sigma \in \mathcal{ACH}^{k,\alpha}_n$ with $\mathcal{C}(\Sigma)=\mathcal{C}$.\label{main}
\end{thm}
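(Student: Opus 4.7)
The plan is to combine an existence statement coming from the Bernstein--Wang integer degree with a uniqueness argument based on the mountain-pass theorem and an entropy gap for unstable expanders.

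\emph{Existence.} By the integer degree construction of Bernstein and Wang \cite{2}, the signed count $\deg_E(\mathcal{C})\in\mathbb{Z}$ of self-expanders asymptotic to a $C^{k,\alpha}$-regular cone $\mathcal{C}$ is a well-defined homotopy invariant (for generic cones), with stable expanders contributing $+1$ and unstable ones contributing according to the parity of the $F$-index. I would first deform $\mathcal{C}$ through a one-parameter family of cones $\{\mathcal{C}_s\}$ with $\lambda[\mathcal{C}_s]<1+\delta$ to a flat hyperplane, for which the unique asymptotically conical self-expander is the hyperplane itself (index $0$, contributing $+1$). Since $\deg_E$ is invariant under such a deformation, $\deg_E(\mathcal{C})=+1$, producing at least one self-expander $\Sigma_1\in\mathcal{ACH}^{k,\alpha}_n$ asymptotic to $\mathcal{C}$. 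Applying the Bernstein--Wang entropy/stability gap (any asymptotically conical self-expander with $\lambda[\Sigma]<1+\delta_0$ for some dimensional $\delta_0=\delta_0(n)>0$ must be stable) ensures $\Sigma_1$ is stable.

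\emph{Uniqueness.} Suppose, for contradiction, a second stable self-expander $\Sigma_2\neq\Sigma_1$ asymptotic to $\mathcal{C}$ also exists. I would then apply the mountain-pass theorem to the expander energy functional (the analog of Colding--Minicozzi's $F$-functional) on the space of hypersurfaces in $\mathcal{ACH}^{k,\alpha}_n$ with fixed asymptotic cone $\mathcal{C}$, treating $\Sigma_1,\Sigma_2$ as strict local minima. To control the mountain-pass value from above, I would construct a comparison path $\gamma$ joining $\Sigma_1$ to $\Sigma_2$ by flowing each of them by the self-similar rescaling $t\mapsto\sqrt{t}\Sigma_i$ down to a small scale (where both are uniformly close to $\mathcal{C}$), connecting near $\mathcal{C}$, and then flowing back. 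Along such a path, Huisken's monotonicity \cite{9} and the inequality $F\leq\lambda$ give $F[\gamma(\cdot)]\leq\lambda[\mathcal{C}]<1+\delta$. The mountain-pass critical point $\Sigma_3$ is therefore a self-expander asymptotic to $\mathcal{C}$ with $F$-index at least $1$ and $\lambda[\Sigma_3]\leq\lambda[\mathcal{C}]<1+\delta$. Choosing $\delta\leq\delta_0$ contradicts the stability gap invoked above.

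\emph{Main obstacle.} The principal difficulty is the rigorous execution of the mountain-pass construction in this noncompact, asymptotically conical setting: one must verify a Palais--Smale-type condition for variations that preserve the asymptotic cone, confirm that min-max sequences do not drift to a different cone or develop singularities at infinity, and ensure the resulting critical point lies in $\mathcal{ACH}^{k,\alpha}_n$ with $\mathcal{C}(\Sigma_3)=\mathcal{C}$. The weighted H\"older framework and the compactness theorem for self-expanders of bounded entropy developed in \cite{2} should supply precisely the required analytic inputs, and the proof should then reduce to carefully assembling these tools along with the entropy/stability gap quoted above.
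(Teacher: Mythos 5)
Your overall strategy -- low entropy forces every self-expander asymptotic to $\mathcal{C}$ to be stable, and a mountain-pass argument then rules out a second solution -- is the same as the paper's, but two of your steps are asserted where the paper has to do real work, and one step claims more than the cited tools deliver.

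First, the ``entropy/stability gap'' that you quote as a known Bernstein--Wang result is in fact the main technical content of the paper. It is obtained in two stages: Guo's theorem (Theorem \ref{Guo}) converts the entropy bound $\lambda[\mathcal{C}]<1+\epsilon$ into a pointwise curvature bound $\|A_\Sigma\|_{L^\infty}\le\kappa$, and Lemma \ref{Poincare} then shows via a Poincar\'e-type inequality (integrating $(\Delta_\Sigma+\tfrac12\mathbf{x}\cdot\nabla_\Sigma)(|\mathbf{x}|^2+2n)=|\mathbf{x}|^2+2n$ against $u^2$) that $|A_\Sigma|^2\le\frac{n+1}{2}$ already implies \emph{strict} stability. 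You cannot simply cite this; without it the proof has no content. Second, your mountain-pass step applies the Bernstein--Wang min-max theorem directly to the pair $\Sigma_1,\Sigma_2$, but \cite[Theorem 1.1]{5} requires \emph{ordered} stable barriers $\Sigma_-\preceq\Sigma_+$ trapping both; the paper first invokes Theorem \ref{constr} to manufacture such $\Sigma_\pm$, and only then runs the min-max. Moreover the critical point it produces may a priori have a codimension-$7$ singular set, and the paper needs Lemma \ref{regul} (low entropy plus Allard regularity forces all tangent cones to be planes) to conclude it is a smooth element of $\mathcal{ACH}^{k,\alpha}_n$. Your proposal to re-derive Palais--Smale compactness from scratch is essentially a proposal to reprove \cite{5}, which you correctly flag as the main obstacle but do not resolve.

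Finally, your punchline -- that the mountain-pass critical point has $F$-index at least $1$, contradicting the stability gap -- is not what the cited min-max theorem supplies; it only guarantees a new self-expander distinct from the barriers, not one of positive index. The paper avoids this by a different closing move: strict stability of every expander over $\mathcal{C}$ makes $\mathcal{C}$ a regular value, so by the integer-degree theorem \cite[Theorem 1.3]{3} the fiber $\Pi^{-1}(\mathcal{C})$ is finite, and iterating the trap-and-min-max construction would produce infinitely many distinct self-expanders, a contradiction. If you want to keep your index argument you would need to extract instability of the min-max point from \cite{5}, which is not stated there; otherwise you should adopt the finiteness-plus-iteration ending. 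Your existence argument via deforming the cone to a hyperplane also quietly assumes the space of cones of entropy below $1+\delta$ is path-connected through regular values, which is not obvious; the paper sidesteps this by citing Ding's existence theorem together with \cite[Proposition 3.3]{4}.
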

\begin{rem}
As pointed out in \cite[Theorem 8.21]{14}, the outermost flows of any hypercone are self-expanders. Hence, it follows from our result that for a low entropy cone, the inner and outer flows coincide, so the cone doesn't fatten.
\end{rem}

\section{Some regularity results}
\par\indent{In \cite{6}, the authors defined a space $\mathcal{RMC}_n$, consisting of all regular minimal cones in $\mathbb{R}^{n+1}$ and let $\mathcal{RMC}^{*}_n$ be the non-flat elements in $\mathcal{RMC}_n$. For any $\Lambda>1$, let}
\begin{equation*}
    \mathcal{RMC}_n(\Lambda)=\{\mathcal{C}\in \mathcal{RMC}_n:\ \lambda[\mathcal{C}]<\Lambda\}\ and\ \mathcal{RMC}^{*}_n(\Lambda)=\mathcal{RMC}^{*}_n\cap\mathcal{RMC}_n(\Lambda)
\end{equation*}
Since all regular minimal cones in $\mathbb{R}^{2}$ consist of the unions of rays and great circles are the only geodesics in $\mathbb{S}^2$, $\mathcal{RMC}^*_1(\Lambda)=\mathcal{RMC}^*_2(\Lambda)=\emptyset$ for all $\Lambda>1$. Now fix the dimension $n\geq 3$ and a value $\Lambda>1$. Consider the following hypothesis:
\begin{equation}
    \text{For all }3\leq l\leq n,\ \mathcal{RMC}^{*}_l(\Lambda)=\emptyset \label{2.1}
\end{equation}
\begin{lem}
There is a constant $\Lambda=\Lambda_n>1$, so that the hypothesis \textnormal{($\ref{2.1}$)} holds.
\end{lem}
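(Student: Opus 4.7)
The plan is to argue by contradiction, combining compactness of minimal cones of bounded entropy with a rigidity statement for minimal hypersurfaces close to an equator. Since only finitely many dimensions $l \in \{3, \dots, n\}$ are in play, it suffices to prove, for each such $l$, that some $\Lambda_l > 1$ satisfies $\mathcal{RMC}^{*}_l(\Lambda_l) = \emptyset$, and then set $\Lambda_n = \min_{3 \leq l \leq n} \Lambda_l$. Fixing $l$, suppose toward contradiction that there is a sequence of non-flat regular minimal cones $\mathcal{C}_k \in \mathcal{RMC}^{*}_l$ with $\lambda[\mathcal{C}_k] \to 1$.

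Each $\mathcal{C}_k \subset \mathbb{R}^{l+1}$ is determined by its smooth embedded link $\mathcal{L}_k = \mathcal{C}_k \cap \mathbb{S}^{l}$, a closed embedded minimal hypersurface in $\mathbb{S}^{l}$. A direct polar-coordinate calculation, using that the supremum defining $\lambda$ is attained at the vertex for a minimal cone through the origin, gives $\lambda[\mathcal{C}_k] = \mathcal{H}^{l-1}(\mathcal{L}_k)/\mathcal{H}^{l-1}(\mathbb{S}^{l-1})$, so $\mathcal{H}^{l-1}(\mathcal{L}_k) \to \mathcal{H}^{l-1}(\mathbb{S}^{l-1})$. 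Allard's compactness for stationary integral varifolds then yields a subsequential limit $V$, a stationary integral $(l-1)$-varifold in $\mathbb{S}^{l}$ with $\|V\| = \mathcal{H}^{l-1}(\mathbb{S}^{l-1})$. The lower bound that any non-trivial stationary integral varifold in $\mathbb{S}^{l}$ has mass at least $\mathcal{H}^{l-1}(\mathbb{S}^{l-1})$ (equivalently, the Colding--Minicozzi entropy inequality $\lambda \geq 1$ applied to the associated cone) forces $V$ to be a great $(l-1)$-sphere carried with multiplicity one.

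Once $V$ is identified as an equator of unit multiplicity, Allard's regularity theorem upgrades the varifold convergence to smooth convergence on compact subsets. For $k$ large, $\mathcal{L}_k$ is then a minimal graph over a great sphere with arbitrarily small $C^{2}$-norm. A standard Jacobi-field argument on the round sphere---the stability operator along the equator is $\Delta_{\mathbb{S}^{l-1}} + (l-1)$, whose first-eigenspace consists of linear functions implementing rigid rotations of the equator---then shows that such a small minimal graph must itself be a great $(l-1)$-sphere, contradicting $\mathcal{C}_k \in \mathcal{RMC}^{*}_l$.

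The main obstacle is securing the multiplicity-one conclusion for the varifold limit, since Allard's regularity fundamentally requires it; here it is forced by the exact match between the limiting mass and the mass of a single equator, combined with the universal lower area bound. A secondary subtlety is handling the contribution of regular minimal cones of dimension $l \leq 6$, but by Simons' theorem together with Bombieri--De Giorgi--Giusti we have $\mathcal{RMC}^{*}_l = \emptyset$ for such $l$ unconditionally, so the argument above is only genuinely needed in dimensions $l \geq 7$.
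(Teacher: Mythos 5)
Your strategy (compactness of the links plus rigidity of the equator) is genuinely different from the paper's, which is far more direct: the paper shows that a regular minimal cone is stationary as a varifold through the vertex, computes that its Gaussian surface area equals its density at the origin, bounds this by $\lambda[\mathcal{C}]$, and then applies Allard's regularity theorem once, at the vertex, to conclude that a cone with entropy sufficiently close to $1$ is smooth at $0$ and hence a hyperplane. That argument is effective (no contradiction or compactness needed) and only a few lines long. Your route can be completed, but as written it has two problems.

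First, the final rigidity step is a genuine gap. The Jacobi operator of the equator $\mathbb{S}^{l-1}\subset\mathbb{S}^{l}$ is $\Delta_{\mathbb{S}^{l-1}}+(l-1)$, and since $l-1$ is the first nonzero eigenvalue of $-\Delta_{\mathbb{S}^{l-1}}$, its kernel is the $l$-dimensional space of linear functions. So the equator is a \emph{degenerate} critical point of area, and the implicit function theorem does not by itself show that a small minimal graph over it is again an equator; you would need a Lyapunov--Schmidt/integrability argument (every zero of the reduced bifurcation map comes from a rotation), which you assert but do not supply. The clean fix is the pinching theorem of Simons and Chern--do Carmo--Kobayashi/Lawson: a closed minimal hypersurface in $\mathbb{S}^{l}$ with $|A|^{2}\le l-1$ has either $|A|^{2}\equiv 0$ or $|A|^{2}\equiv l-1$; smooth convergence to the equator forces $\sup|A_{\mathcal{L}_k}|^{2}\to 0$, ruling out the second alternative, so $\mathcal{L}_k$ is totally geodesic for large $k$, the desired contradiction. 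Second, your closing remark that $\mathcal{RMC}^{*}_{l}=\emptyset$ for $l\le 6$ by Simons and Bombieri--De Giorgi--Giusti is false: those theorems concern area-minimizing (or stable) cones, whereas $\mathcal{RMC}^{*}_{l}$ contains \emph{all} non-flat regular minimal cones; for instance the cone over the Clifford torus in $\mathbb{S}^{3}$ lies in $\mathcal{RMC}^{*}_{3}$ (with vertex density $\pi/2$). This error is not load-bearing, since your main argument covers every $l\ge 3$, but that paragraph should be deleted. Finally, note that your multiplicity-one step (the density of a stationary cone at the vertex is $\ge 1$, with equality only for a multiplicity-one plane) is, once unwound, essentially the same Allard-at-the-vertex argument the paper applies directly to $\mathcal{C}$ itself --- a sign that the compactness detour is avoidable.
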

\begin{proof}
We first show that any regular minimal cone $\mathcal{C}$ has generalized mean curvature 0 near the origin. Let $X$ be a smooth vector field compactly supported in $\mathbb{R}^{n+1}$. Define a smooth cut-off function $\eta$, so that $\eta=0$ in $B_{\frac{1}{2}}(0)$ and $\eta=1$ outside $B_1(0)$. Let $\eta_r=\eta(\frac{\cdot}{r})$, then
\begin{equation}
\begin{split}
        \int_{\mathcal{C}}\eta_r\big(div_{\mathcal{C}}X\big)d\mathcal{H}^n&=\int_{\mathcal{C}}div_{\mathcal{C}}\big(\eta_r X\big)d\mathcal{H}^n-\int_{\mathcal{C}}X\cdot \nabla_{\mathcal{C}}\eta_r d\mathcal{H}^n \label{2.2}\\
        &=-\int_{\mathcal{C}}X\cdot \nabla_{\mathcal{C}}\eta_r d\mathcal{H}^n
\end{split}
\end{equation}
The construction of $\eta_r$ gives 
\begin{equation*}
    Spt\ \nabla \eta_r\subset \overline{B_{r}(0)}\backslash B_{\frac{r}{2}}(0)\ and\ |\nabla_\mathcal{C}\eta_r |\leq |\nabla \eta_r| \leq \frac{C}{r}
\end{equation*}
where $C=C(n)$ is a constant. Let $\mathcal{L}(\mathcal{C})=\mathcal{C}\cap \mathbb{S}^n$ be the regular codimension-1 submanifold in $\mathbb{S}^n$, then $\mathcal{H}^{n-1}(\mathcal{L}(\mathcal{C}))<\infty$. This gives a upper bound for the last term in ($\ref{2.2}$)
\begin{equation*}
\begin{split}
    |\int_{\mathcal{C}}X\cdot \nabla_{\mathcal{C}}\eta_r d\mathcal{H}^n|&\leq \parallel X\parallel_{\infty} |\int_{\mathcal{C}\cap B_{r}(0)\backslash B_{\frac{r}{2}}(0)}\frac{C}{r}|d\mathcal{H}^n\\
    &\leq \parallel X\parallel_{\infty}\cdot \frac{C}{r}\mathcal{H}^{n-1}(\mathcal{L}(\mathcal{C}))\int_{\frac{r}{2}}^r s^{n-1}ds\\
    &\leq C\mathcal{H}^{n-1}\mathcal{L}(\mathcal{C})\parallel X\parallel_{\infty}r^{n-1}
\end{split}
\end{equation*}
So it goes to 0 as $r\rightarrow 0$. Letting $r\rightarrow 0$ in ($\ref{2.2}$), we get
\begin{equation}
    \int_{\mathcal{C}}div_{\mathcal{C}}X d\mathcal{H}^n=0
\end{equation}
for any vector field compactly supported in $\mathbb{R}^{n+1}$, which means $\mathcal{C}$ has generalized mean curvature near the origin and it vanishes.
\par\indent{Next, we relate the entropy to the density at origin. Observe that}
\begin{equation*}
    \begin{split}
        \frac{1}{(4\pi)^{\frac{n}{2}}}\int_{\mathcal{C}}e^{-\frac{|\textbf{x}|^2}{4}}d\mathcal{H}^{n}&=\frac{1}{(4\pi)^{\frac{n}{2}}}\mathcal{H}^{n-1}(\mathcal{L}(\mathcal{C}))\int_0^\infty r^{n-1}e^{-\frac{r^2}{4}}dr\\
        &=\frac{1}{2\pi^{\frac{n}{2}}}\mathcal{H}^{n-1}(\mathcal{L}(\mathcal{C}))\Gamma(\frac{n}{2})\\
        &=\frac{\mathcal{H}^{n-1}(\mathcal{L}(\mathcal{C}))}{n\omega_n}\\
        &=\frac{\mathcal{H}^{n}(B_{\rho}(0))}{\omega_n \rho^n}\\
        &=\Theta(\mathcal{C},0)
    \end{split}
\end{equation*}
where $\omega_n=\frac{\pi^{\frac{n}{2}}}{\Gamma(\frac{n}{2}+1)}$ is the volume of the unit ball in $\mathbb{R}^{n}$.
So the density $\Theta(\mathcal{C},0)=F[\mathcal{C}]\leq \lambda[\mathcal{C}]$. Thus, by Allard's regularity theorem \cite{10}, if $\Lambda_n$ is sufficiently small, then $\mathcal{C}$ is smooth at the origin, and it has to be a hyperplane.
\end{proof}
\begin{rem}
In fact, if we replace the regular cone above by a general stationary integral varifold $\mathcal{C}$ with $\eta_{0,\rho\#}\mathcal{C}=\mathcal{C}$ for all $\rho>0$, the result still holds. Indeed, we get the smoothness near the origin in the same way and the dilation invariance implies it is smooth everywhere.
\end{rem}
\par\indent{The following is a lemma from \cite{6}. For the sake of completeness, we include a proof here.}
\begin{lem}
For $k\geq 2$ and $\alpha\in (0,1)$, let $\mathcal{C}$ be a $C^{k,\alpha}$-regular cone in $\mathbb{R}^{n+1}$ and assume $\lambda[\mathcal{C}]<\Lambda_n$. If $V$ is an $E$-stationary integral varifold with tangent cone at infinity equal to $\mathcal{C}$, then $V=V_\Sigma$ for an element $\Sigma\in \mathcal{ACH}^{k,\alpha}_n$ satisfying (\ref{1.1}).\label{regul}
\end{lem}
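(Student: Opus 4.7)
My plan is to combine interior regularity coming from the entropy hypothesis with the asymptotic regularity theory of Bernstein-Wang. The first ingredient is an entropy bound $\lambda[V]\le\lambda[\mathcal{C}]<\Lambda_n$: since $\mathcal{C}$ is the tangent cone of $V$ at infinity, the rescalings $\rho V$ converge as varifolds to $\mathcal{C}$ as $\rho\to 0^+$, and the Huisken-type monotonicity formula adapted to $E$-stationary varifolds, together with lower semicontinuity of the Gaussian area, transfers the entropy bound from the cone at infinity to $V$ itself.

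For interior regularity I would take $p\in\mathrm{spt}(V)$ and analyze any tangent cone $\mathcal{C}_p$ of $V$ at $p$. Under the blow-up producing $\mathcal{C}_p$, the Gaussian weight in the $E$-functional becomes effectively constant, so $\mathcal{C}_p$ is a stationary integral varifold for the ordinary area functional on $\mathbb{R}^{n+1}$. Upper semicontinuity of density yields $\lambda[\mathcal{C}_p]\le\lambda[V]<\Lambda_n$. A Federer-style dimension reduction, using the previous lemma at each step to rule out non-flat regular minimal cones in dimensions $3\le l\le n$, then forces every iterated tangent cone to be a hyperplane, and Allard's regularity theorem \cite{10} gives smoothness in a neighborhood of $p$ with multiplicity one (the multiplicity must equal one because $\mathcal{C}$ is a regular multiplicity-one cone at infinity). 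Once $V=V_\Sigma$ for a smooth embedded hypersurface $\Sigma$, the $E$-stationarity of $V$ yields the Euler-Lagrange equation $\mathbf{H}_\Sigma=\mathbf{x}^\perp/2$, which is precisely (\ref{1.1}).

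To promote $\Sigma$ to an element of $\mathcal{ACH}^{k,\alpha}_n$, I would write $\Sigma$ as an exponential graph of some function $u$ over the $C^{k,\alpha}$-regular cone $\mathcal{C}$ outside a sufficiently large ball — possible because $\mathcal{C}$ is the regular tangent cone at infinity — and observe that the self-expander equation becomes an asymptotically linearized minimal-surface-type quasilinear elliptic PDE for $u$ with a drift contribution from $\mathbf{x}^\perp/2$. Standard Schauder theory in the weighted Hölder spaces used in \cite[Section 2]{2} to define $C^{k,\alpha}_*$-asymptotic convergence then delivers the required decay of $u$ and places $\Sigma$ in $\mathcal{ACH}^{k,\alpha}_n$ with $\mathcal{C}(\Sigma)=\mathcal{C}$. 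The principal obstacle I anticipate is in the dimension reduction step: one must verify that the entropy bound $\Lambda_n$ is preserved under the product splittings $\mathcal{C}_p=\mathcal{C}'\times\mathbb{R}^j$ arising during iteration, so that the hypothesis (\ref{2.1}) genuinely excludes every non-flat tangent cone and not merely the top-dimensional ones.
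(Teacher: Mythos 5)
Your overall architecture --- transferring the entropy bound to $V$ via monotonicity, analyzing tangent cones at each point of $\mathrm{spt}\,\mu_V$, invoking Allard, and then obtaining the $C^{k,\alpha}_*$-asymptotics by writing $\Sigma$ as a graph over $\mathcal{C}$ near infinity as in \cite{4} --- matches the paper's. But at the decisive step, namely why every tangent cone $\mathcal{C}_p$ is a multiplicity-one hyperplane, you take a different route that has a genuine gap. The paper runs no dimension reduction at all: it observes that any tangent varifold $\mathcal{C}_p$ is a dilation-invariant stationary integral varifold (stationarity for the \emph{unweighted} area functional coming from $\rho^{1-n}\|\delta V\|(B_\rho(x))\to 0$), that for such a cone the density ratio at the vertex is scale-invariant and equals $F[\mathcal{C}_p]\le\lambda[\mathcal{C}_p]\le\lambda[V]<\Lambda_n$, and hence --- because $\Lambda_n$ is chosen below the Allard threshold --- that Allard's theorem \cite{10} applies \emph{at the vertex itself}, forcing $\mathcal{C}_p$ to be smooth at $0$ and therefore a plane. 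This is exactly the content of the remark following the first lemma of Section~2, which extends that lemma from regular minimal cones to arbitrary stationary integral cones, and it is the whole reason $\Lambda_n$ is taken so small.

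The gap in your version is the Federer-style dimension reduction. Hypothesis (\ref{2.1}) excludes only non-flat \emph{regular} minimal cones, i.e.\ those with smooth link, whereas the iterated tangent cones produced by blowing up a general stationary integral varifold need not be regular; closing that induction requires the stratification machinery of \cite{11} and is how \cite{6} proceeds when the entropy threshold $\Lambda$ is large, but you have not supplied the argument that the reduction terminates at regular cones, and for general stationary integral varifolds this is precisely the delicate point. (The issue you flag --- preservation of the entropy bound under splittings $\mathcal{C}'\times\mathbb{R}^j$ --- is the easy part, since $F$ and $\lambda$ are unchanged by taking products with Euclidean factors; the real obstruction is the regularity of the lower-dimensional cross-sections.) The fix is simply to discard the reduction and apply Allard at the vertex of $\mathcal{C}_p$ directly, using the density--entropy inequality above. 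The remaining ingredients of your sketch --- multiplicity one from the density bound, the Euler--Lagrange equation (\ref{1.1}) from $E$-stationarity, and the weighted Schauder argument for membership in $\mathcal{ACH}^{k,\alpha}_n$ --- are consistent with what the paper cites from \cite{4} and \cite{2}.
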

\begin{proof}
For every point $x\in Spt \mu_V$, we will show that there is a tangent plane $\mathcal{P}_x$. If that is the case, $\Theta^{n}(V,x)=\Theta^n(\mathcal{P}_x,0)=1$. Together with the fact that $V$ has locally bounded mean curvature, the Allard's regularity theorem applies and $V$ is smooth near $x$.
\par{Suppose a sequence of positive number $\{\lambda_i\}\rightarrow 0$, and $\eta_{x,\lambda_i\#} V$ converges to a tangent varifold $\mathcal{C}_x$, where $\eta_{x,\lambda_i}(y)=\frac{y-x}{\lambda_i}$. By the nature of convergence and Huisken's monotonicity formula \cite{9},}
\begin{equation*}
    \lambda[\mathcal{C}_x]\leq \lambda[V]\leq \lambda[\mathcal{C}]<\Lambda_n
\end{equation*}
On the other hand,
\begin{equation*}
\begin{split}
    \rho^{1-n}\parallel\delta V\parallel (B_{\rho}(x))&\leq \rho^{1-n}\int_{V\cap B_{\rho}(x)}|\textbf{H}_V|d\mathcal{H}^n\\
    &\leq \parallel \textbf{H}_V\parallel_{\infty}\rho\cdot\frac{\mathcal{H}^n(V\cap B_{\rho}(x))}{\rho^n}
\end{split}
\end{equation*}
It goes to 0 as $\rho\rightarrow 0^+$. Then it follows from the knowledge of varifold in \cite{10} that $\eta_{0,\rho\#}\mathcal{C}_x=\mathcal{C}_x$ for $\rho>0$, and that $\mathcal{C}_x$ is stationary. The previous lemma indicates $\mathcal{C}_x$ must be a plane. Thus $V=V_\Sigma$ for some smooth self-expander $\Sigma$. Following \cite{4}, $\Sigma$ is $C^{k,\alpha}_*$-asymptotic to $\mathcal{C}$.
\end{proof}
\par\indent{We also need the notion of partial ordering. Roughly speaking, $\Sigma_1\preceq\Sigma_2$, if the hypersurface $\Sigma_1$ is ``above" $\Sigma_2$. For the detailed explanation, we refer to \cite[Section 4]{6}. The following theorem from that paper is a useful tool to construct self-expanders:}
\begin{thm}
For $k\geq 2$ and $\alpha \in (0,1)$, let $\mathcal{C}$ be a $C^{k,\alpha}$-regular cone in $\mathbb{R}^{n+1}$ and assume either $2\leq n\leq 6$ or $\lambda[\mathcal{C}]<\Lambda_n$. For any two $\Sigma_1$ and $\Sigma_2 \in \mathcal{ACH}^{k,\alpha}_n$, with $\mathcal{C}(\Sigma_1)=\mathcal{C}(\Sigma_2)=\mathcal{C}$, there exist $\Sigma_{\pm}$ stable self-expanders asymptotic to $\mathcal{C}$ with $\Sigma_{-}\preceq \Sigma_{i}\preceq \Sigma_{+}$ for $i=1,2$.\label{constr}
\end{thm}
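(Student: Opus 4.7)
The plan is to construct $\Sigma_{\pm}$ as asymptotic limits of rescaled mean curvature flows starting from Lipschitz barriers that sandwich $\Sigma_1$ and $\Sigma_2$, and then to invoke Lemma~\ref{regul} to upgrade the limiting stationary varifolds to smooth self-expanders. The broad structure is: build barriers; flow them; pass to the limit via Huisken-type monotonicity and Brakke compactness; conclude regularity and stability.

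First, I would build initial barriers. For each $i=1,2$, let $\Omega^{\pm}_i$ denote the two open components of $\mathbb{R}^{n+1}\setminus \Sigma_i$, labeled so that the partial ordering $\preceq$ recalled from \cite{6} is compatible with inclusion. Taking appropriate unions and intersections of the $\Omega^{\pm}_i$ yields open sets whose Lipschitz boundaries $\Sigma^{\pm}_0$ are asymptotic to $\mathcal{C}$ and satisfy $\Sigma^{-}_0 \preceq \Sigma_i \preceq \Sigma^{+}_0$ for $i=1,2$.

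Next, I would evolve $\Sigma^{\pm}_0$ by (Brakke or level-set) mean curvature flow to obtain $\{\Sigma^{\pm}_t\}_{t>0}$, and pass to the rescaled hypersurfaces $\tilde{\Sigma}^{\pm}_t = t^{-1/2}\Sigma^{\pm}_t$; stationary points of this rescaled flow are exactly the self-expanders satisfying (\ref{1.1}). Huisken's monotonicity \cite{9} forces $F[\tilde{\Sigma}^{\pm}_t]$ to be monotone in the rescaled time, and the avoidance principle for MCF survives the rescaling, giving $\tilde{\Sigma}^{-}_t \preceq \Sigma_i \preceq \tilde{\Sigma}^{+}_t$ for all $t>0$. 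Along any sequence $t_j\to \infty$, Brakke's compactness theorem for flows of bounded Gaussian area yields subsequential limits $V^{\pm}$ which are $E$-stationary integral varifolds whose tangent cone at infinity is $\mathcal{C}$.

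Now I would invoke regularity. In the regime $\lambda[\mathcal{C}]<\Lambda_n$, Lemma~\ref{regul} applies directly and gives $\Sigma^{\pm}\in \mathcal{ACH}^{k,\alpha}_n$ with $V^{\pm}=V_{\Sigma^{\pm}}$ satisfying (\ref{1.1}). In the dimensional regime $2\le n \le 6$, I would instead run a Federer-style dimension reduction on the singular set of $V^{\pm}$: iterated tangent cones are stationary integral minimal cones of dimension at most $6$, and by the Simons--Bombieri--De Giorgi--Giusti classification these must be hyperplanes, after which Allard's theorem upgrades $V^{\pm}$ to a smooth self-expander. The partial ordering passes to these limits, and stability of $\Sigma^{\pm}$ is automatic from their construction as outermost/innermost stationary limits: each $\Sigma^{\pm}$ is a one-sided minimizer of the expander energy $E$, which forces nonnegativity of its second variation. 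The main obstacle I expect is the non-triviality and convergence of the rescaled flow as $t\to\infty$: non-compactness allows in principle mass to escape to infinity or concentrate, and one must use the uniform $C^{k,\alpha}_*$-asymptotic control inherited from $\mathcal{ACH}^{k,\alpha}_n$ to anchor the flow uniformly to $\mathcal{C}$ on all scales throughout the evolution. Once this uniform control is in place, Lemma~\ref{regul} carries the bulk of the regularity step.
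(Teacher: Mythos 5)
First, note that the paper does not actually prove this statement: Theorem \ref{constr} is quoted from \cite{6}, where $\Sigma_{\pm}$ are constructed \emph{variationally}, as boundaries of one-sided minimizers of the expander functional $E$ over Caccioppoli sets trapped above (resp.\ below) the union (resp.\ intersection) of the regions bounded by $\Sigma_1$ and $\Sigma_2$. Your barrier construction in the first step matches theirs, but replacing the minimization by a long-time rescaled mean curvature flow is a genuinely different route, closer in spirit to the outermost-flow construction of \cite{14}.

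The gap in your version sits in the last two steps, and in both cases the missing ingredient is exactly what the variational construction supplies for free: a one-sided minimizing structure. (i) In the regime $2\leq n\leq 6$ without the entropy bound, you classify iterated tangent cones of the limit varifolds $V^{\pm}$ via Simons--Bombieri--De Giorgi--Giusti. That classification applies to \emph{area-minimizing} (or, by Schoen--Simon, stable multiplicity-one) hypercones; a subsequential Brakke/varifold limit of the rescaled flow is a priori only an $E$-\emph{stationary} integral varifold, and its tangent cones are merely stationary. Non-flat stationary regular hypercones exist already for $n=3$ (e.g.\ the cone over the Clifford torus in $\mathbb{S}^3$), so the dimension reduction does not close without first showing $V^{\pm}$ is minimizing, or stable with unit density. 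This is precisely why the theorem's hypothesis is the dichotomy ``$2\leq n\leq 6$ or $\lambda[\mathcal{C}]<\Lambda_n$'': the first alternative is the regularity threshold for $E$-minimizing boundaries, not for stationary varifolds. (ii) Likewise, stability of $\Sigma_{\pm}$ is not ``automatic'': being the outermost or innermost subsequential limit of a flow does not by itself make $\Sigma_{\pm}$ a one-sided minimizer of $E$; that implication needs a separate argument (e.g.\ identifying the outer flow with the boundary of an evolving minimizing hull, or a direct replacement argument). If instead you build $\Sigma_{\pm}$ from the start as one-sided $E$-minimizers in the regions cut out by your Lipschitz barriers --- which is what \cite{6} does --- then the interior regularity (in $n\leq 6$, or via Lemma \ref{regul} under the entropy bound), the stability, and the ordering $\Sigma_{-}\preceq\Sigma_{i}\preceq\Sigma_{+}$ via the strong maximum principle all follow directly.
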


\section{The Relationship Between Entropy and Stability}
\par\indent{As oberved by S. Guo \cite{13}, if the entropy of the cone is sufficiently small, then we have the curvature bound for the self-expander. More precisely,}
\begin{thm}
Given $\kappa>0$, there exists a constant $\epsilon>0$ depending on $n$ and $\kappa$ with the following property.
\par\indent{If $\Sigma$ is a self-expander that is asymptotic to a regular cone $\mathcal{C}$ with $\lambda[\mathcal{C}]<1+\epsilon$. Then we have $\parallel A_{\Sigma}\parallel_{L^{\infty}}\leq \kappa$.}\label{Guo}
\end{thm}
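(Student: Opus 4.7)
My plan is to argue by contradiction using a rescaling/blow-up argument combined with Huisken's monotonicity formula and a rigidity result for entropy-one smooth limits. Suppose the conclusion fails: then there is $\kappa>0$ and a sequence of self-expanders $\Sigma_i$ asymptotic to regular cones $\mathcal{C}_i$ with $\lambda[\mathcal{C}_i]<1+1/i$ but $\|A_{\Sigma_i}\|_{L^\infty}>\kappa$. Since the family $\{\sqrt{t}\Sigma_i\}_{t>0}$ is a mean curvature flow whose $t\to 0^+$ limit is $\mathcal{C}_i$, Huisken's monotonicity gives $\lambda[\Sigma_i]\leq \lambda[\mathcal{C}_i]$, so $\lambda[\Sigma_i]\to 1$.

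Next I would blow up at points of maximal curvature. Because each $\Sigma_i$ is $C^{k,\alpha}_*$-asymptotic to a regular cone, $|A_{\Sigma_i}|$ decays at infinity and attains its supremum at some $x_i\in \Sigma_i$, with value $\kappa_i>\kappa$. Set $\lambda_i=\max\{\kappa_i,1\}$ and define $\tilde{\Sigma}_i:=\lambda_i(\Sigma_i-x_i)$. Entropy is scale- and translation-invariant, so $\lambda[\tilde\Sigma_i]=\lambda[\Sigma_i]\to 1$. By construction $|A_{\tilde\Sigma_i}|\leq 1$ globally, $|A_{\tilde\Sigma_i}(0)|\geq \min\{1,\kappa\}$, and the self-expander equation rescales to give mean curvature bounded pointwise at $\tilde x$ by $\lambda_i^{-1}(|x_i|+|\tilde x|/\lambda_i)/2$. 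If $\kappa_i\to\infty$ the rescaled mean curvatures converge to zero on compact subsets and the limiting geometry is minimal; otherwise the mean-curvature sequence is locally uniformly bounded.

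With the uniform curvature bound and the entropy control, standard compactness (Allard regularity, together with Schauder estimates for the rescaled self-expander equation written in graphical form over small tangent disks) allows extraction of a subsequence converging smoothly on compact subsets to a complete, properly embedded hypersurface $\Sigma_\infty$, which is either a minimal hypersurface (when $\kappa_i\to\infty$) or a self-expander translated to some base point. Lower semi-continuity of entropy yields $\lambda[\Sigma_\infty]\leq 1$, while the pointwise density inequality forces $\lambda[\Sigma_\infty]\geq 1$ always. Hence $\lambda[\Sigma_\infty]=1$, and entropy-one rigidity (a complete smooth minimal hypersurface or self-expander with entropy one is a hyperplane, in the spirit of Bernstein-Wang and Colding-Minicozzi) forces $\Sigma_\infty$ to be a hyperplane. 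This contradicts $|A_{\Sigma_\infty}(0)|\geq \min\{1,\kappa\}>0$.

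The main obstacle I anticipate is ensuring smooth, proper subsequential convergence at the blow-up scale while preserving the entropy bound: properness of $\Sigma_\infty$ and the absence of density jumps must be forced from the uniform curvature bound together with a local Gaussian-area estimate derived from the entropy hypothesis, and the choice of rescaling scale $\lambda_i=\max\{\kappa_i,1\}$ is what lets the same compactness argument handle both the $\kappa_i\to\infty$ (minimal) and bounded (expander) sub-cases. The other delicate ingredient is applying entropy-one rigidity uniformly to both possible limits, since in the bounded $\kappa_i$ case the limit is a self-expander rather than a minimal hypersurface.
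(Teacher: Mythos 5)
The paper does not actually prove this statement --- it is quoted from Guo \cite{13} --- so there is no internal proof to compare against; I will assess your argument on its own terms and against the standard one. Your overall strategy (contradiction, blow-up at maximal-curvature points, entropy-one rigidity of the limit) is viable, but there is a concrete gap in the identification of the blow-up limit. The points $x_i$ where $|A_{\Sigma_i}|$ is maximized can escape to infinity, and the self-expander equation only controls $|x_i^{\perp}| = 2|\textbf{H}_{\Sigma_i}(x_i)| \leq 2\sqrt{n}\,\kappa_i$, not $|x_i|$ itself. Consequently your bound $\lambda_i^{-1}(|x_i|+|\tilde x|/\lambda_i)/2$ on the rescaled mean curvature is not uniformly small unless you separately show $|x_i|/\lambda_i \to 0$, and the claim that the limit is minimal when $\kappa_i \to \infty$ does not follow. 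After recentering and rescaling, the equation becomes $\textbf{H}_{\tilde\Sigma_i} = \frac{1}{2\lambda_i}(x_i + \tilde x/\lambda_i)^{\perp}$, whose limit (when it exists) is in general a translator equation $\textbf{H} = \frac{1}{2}v^{\perp}$, or an expander recentered at a finite point, or a minimal hypersurface, depending on the behavior of $x_i/\lambda_i$; you must either treat all of these cases or rule out the escaping-point scenario. Relatedly, the ``entropy-one rigidity'' you invoke must be justified for each limit type, not only for minimal hypersurfaces; the uniform statement that works is that any mean curvature flow with entropy at most $1$ (the static flow of a minimal hypersurface, $\sqrt{t}\,\Sigma$ for an expander, $\Sigma + tv$ for a translator) has all Gaussian density ratios squeezed to $1$ by Huisken's monotonicity, hence is a static multiplicity-one hyperplane by the equality case of the monotonicity formula together with local regularity.

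A cleaner route, and essentially the one underlying Guo's proof, avoids classifying blow-up limits altogether: work with the flows $M^i_t = \sqrt{t}\,\Sigma_i$, whose Gaussian density ratios at every spacetime center are at most $\lambda[\mathcal{C}_i] < 1 + 1/i$ by Huisken's monotonicity. White's local regularity theorem then gives a uniform curvature bound at time $t=1$ for $i$ large, and Brakke-flow compactness applied to the flows recentered at $(x_i,1)$ produces a limit flow with all densities equal to $1$, i.e.\ a static plane; the smooth convergence provided by White's estimates forces $|A_{\Sigma_i}(x_i)| \to 0$, contradicting $|A_{\Sigma_i}(x_i)| > \kappa$. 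This handles escaping base points automatically and requires no case analysis on $\kappa_i$. I would encourage you either to adopt this formulation or to patch the limit-identification and rigidity steps carefully in your version.
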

Guo pointed out that when $\kappa<\frac{1}{\sqrt{2}}$, all self-expanders are stable.  In the following lemma, we use a $\textnormal{Poincar\'{e}}$ type inequality from \cite{3} to get a slightly strengthening by improving the bound to $\kappa\leq\sqrt{\frac{n+1}{2}}$ .
\begin{lem}
Let $\Sigma$ be a self-expander in $\mathcal{ACH}^{k,\alpha}_n$ satisfying $|A_{\Sigma}|^2\leq \frac{n+1}{2}$. Then $\Sigma$ is strictly stable in the sense that for all $u\in C^{\infty}_{c}(\Sigma)\backslash\{0\}$,
\begin{equation}
    \langle -\mathcal{L}_{\Sigma}u, u\rangle =\int_{\Sigma}\Big[|\nabla_{\Sigma}u|^2+(\frac{1}{2}-|A_{\Sigma}|^2)u^2\Big]e^{\frac{1}{4}|\textbf{x}|^2}d\mathcal{H}^n> 0
\end{equation}\label{Poincare}
\end{lem}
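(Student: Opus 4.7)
The plan is to reduce the claim to a weighted Poincar\'e inequality on $\Sigma$. Specifically, the goal is to show that for every $u\in C^\infty_c(\Sigma)\setminus\{0\}$,
\begin{equation*}
\int_\Sigma |\nabla_\Sigma u|^2\, e^{|\textbf{x}|^2/4}\,d\mathcal{H}^n \;>\; \frac{n}{2}\int_\Sigma u^2\, e^{|\textbf{x}|^2/4}\,d\mathcal{H}^n + \int_\Sigma \frac{|\textbf{x}^\perp|^2}{4}u^2\, e^{|\textbf{x}|^2/4}\,d\mathcal{H}^n.
\end{equation*}
This is the Poincar\'e-type inequality referenced in \cite{3}. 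Granted it, the conclusion is a one-line substitution: since $\tfrac{1}{2}-|A_\Sigma|^2\geq-\tfrac{n}{2}$ by hypothesis, one obtains
\begin{equation*}
\langle -\mathcal{L}_\Sigma u,u\rangle \;>\; \int_\Sigma\bigg[\Big(\tfrac{n+1}{2}-|A_\Sigma|^2\Big)u^2 + \tfrac{|\textbf{x}^\perp|^2}{4}u^2\bigg]e^{|\textbf{x}|^2/4}\,d\mathcal{H}^n \;\geq\; 0,
\end{equation*}
both bracketed terms being nonnegative.

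For the Poincar\'e inequality itself, I would test against the function $g=|\textbf{x}|^2/4$. The self-expander equation $\textbf{H}=\textbf{x}^\perp/2$ together with the standard identity $\Delta_\Sigma|\textbf{x}|^2 = 2n+2\textbf{x}\cdot\textbf{H}$ produces $\Delta_\Sigma g = \tfrac{n}{2} + \tfrac{|\textbf{x}^\perp|^2}{4}$, while tangential projection gives $|\nabla_\Sigma g|^2 = \tfrac{|\textbf{x}^T|^2}{4}$. Integrating $u^2\,\Delta_\Sigma g\cdot e^g$ by parts against $d\mathcal{H}^n$ (valid by the compact support of $u$) and using $|\textbf{x}|^2 = |\textbf{x}^T|^2+|\textbf{x}^\perp|^2$ produces the identity
\begin{equation*}
\int_\Sigma u^2\Big(\tfrac{n}{2}+\tfrac{|\textbf{x}|^2}{4}\Big)e^g\,d\mathcal{H}^n \;=\; -2\int_\Sigma u\,\nabla_\Sigma u\cdot\nabla_\Sigma g\; e^g\,d\mathcal{H}^n.
\end{equation*}
The pointwise Young inequality $-2u\,\nabla_\Sigma u\cdot\nabla_\Sigma g \leq |\nabla_\Sigma u|^2 + u^2|\nabla_\Sigma g|^2$ then yields, after the $|\textbf{x}^T|^2/4$ contributions cancel, the non-strict form of the Poincar\'e inequality.

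The main subtlety is the strictness. Equality in Young's inequality pointwise is equivalent to $\nabla_\Sigma u = -u\nabla_\Sigma g$, i.e.\ $\nabla_\Sigma(u e^g)=0$, so equality in the integrated form forces $ue^g$ to be constant on each connected component of $\Sigma$. Now $\Sigma$ admits no closed component: if it did, then at an interior maximum of $|\textbf{x}|^2$ one would need $\Delta_\Sigma|\textbf{x}|^2 = 2n+|\textbf{x}^\perp|^2\leq 0$, which is impossible. Every component of $\Sigma$ being non-compact and $u$ having compact support, the constant must vanish, giving $u\equiv 0$ and contradicting $u\not\equiv 0$. This strictness step is the only place requiring genuine work beyond routine computation; once it is in hand, the two displayed lines above close out the proof.
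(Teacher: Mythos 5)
Your proof is correct and follows essentially the same route as the paper: the same weighted Poincar\'e inequality, obtained by testing the drift-Laplacian identity for $|\textbf{x}|^2$ (your $\Delta_\Sigma g+|\nabla_\Sigma g|^2$ computation with $g=|\textbf{x}|^2/4$ is exactly the paper's $(\Delta_\Sigma+\tfrac12\textbf{x}\cdot\nabla_\Sigma)(|\textbf{x}|^2+2n)=|\textbf{x}|^2+2n$) against $u^2e^{|\textbf{x}|^2/4}$ and applying Young's inequality. Your handling of the equality case --- $\nabla_\Sigma(ue^{|\textbf{x}|^2/4})=0$ forcing $ue^{|\textbf{x}|^2/4}$ to be constant on each necessarily non-compact component, hence $u\equiv 0$ --- is only a cosmetic variant of the paper's $\log u\to-\infty$ argument on $\{u>0\}$, and is if anything slightly cleaner.
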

\begin{proof}
Following \cite[Appendix A]{3}, since $(\Delta_{\Sigma}+\frac{1}{2}\textbf{x}\cdot\nabla_{\Sigma})(|\textbf{x}|^2+2n)=|\textbf{x}|^2+2n$, integrating by parts gives
\begin{equation}
    \begin{split}
        &\int_{\Sigma}(2n+|\textbf{x}|^2)u^2e^{\frac{1}{4}|\textbf{x}|^2}d\mathcal{H}^n=\int_{\Sigma}\Big[(\Delta_{\Sigma}+\frac{1}{2}\textbf{x}\cdot\nabla_{\Sigma})(|\textbf{x}|^2+2n)\Big]u^2e^{\frac{1}{4}|\textbf{x}|^2}d\mathcal{H}^2\\
        &=-\int_{\Sigma}\nabla_{\Sigma}(|\textbf{x}|^2+2n)\cdot\nabla_{\Sigma}(u^2)e^{\frac{1}{2}|\textbf{x}|^2}d\mathcal{H}^n=-4\int_{\Sigma}u\ \textbf{x}^{\top}\cdot \nabla_{\Sigma}u\ e^{\frac{1}{4}|\textbf{x}|^2}d\mathcal{H}^n\\
        &\leq \int_{\Sigma}(|\textbf{x}^{\top}|^2u^2+4|\nabla_{\Sigma}u|^2)e^{\frac{1}{4}|\textbf{x}|^2}d\mathcal{H}^n. \label{3.2}
    \end{split}
\end{equation}
So moving $|\textbf{x}^{\top}|^2u^2$ to the left hand side,
\begin{equation*}
    \int_{\Sigma }(2n+|\textbf{x}^{\perp }|^2)u^2e^{\frac{1}{4}|\textbf{x}|^2}d\mathcal{H}^n\leq 4\int_{\Sigma }|\nabla_\Sigma u|^2e^{\frac{1}{4}|\textbf{x}|^2}d\mathcal{H}^n
\end{equation*}
Together with $|A_\Sigma |^2\leq \frac{n+1}{2}$,
\begin{equation*}
\begin{split}
    \langle -\mathcal{L}_\Sigma u,u\rangle &=\int_\Sigma \Big[|\nabla_\Sigma u|^2+(\frac{1}{2}-|A_\Sigma|^2)u^2\Big]e^{\frac{1}{4}|\textbf{x}|^2}d\mathcal{H}^n\\
    &\geq\int_\Sigma (|\nabla_\Sigma u|^2-\frac{n}{2}u^2)e^{\frac{1}{4}|\textbf{x}|^2}d\mathcal{H}^n\\
    &\geq\frac{1}{4}\int_{\Sigma}|\textbf{x}^\perp|^2u^2e^{\frac{1}{4}|\textbf{x}|^2}d\mathcal{H}^n\geq 0
\end{split}
\end{equation*}
If there were $u\in C^\infty_c(\Sigma)\backslash \{0\}$ satisfying $\langle -\mathcal{L}_\Sigma u,u\rangle=0$, then the inequlity (\ref{3.2}) above should be an equality, which means 
\begin{equation}
    u\textbf{x}^{\top}=-2\nabla_\Sigma u,\ \textnormal{in }\{u>0\}\label{equa case}
\end{equation}
Fix $p\in \{u>0\}$, $r=\sup\{s>0:\ B^\Sigma_s(p)\subset\{u>0\}\}$ and define $v=\log{u}$ in $B^\Sigma_r(p)$. $u$ being compactly supported implies $r<\infty$ is well-defined and that there is a $p_0\in \partial B_{r}(p)\cap\{u=0\}$. From (\ref{equa case}) we know that $\nabla_\Sigma (v+\frac{1}{4}|\textbf{x}|^2)=0$, which means $v=-\frac{1}{4}|\textbf{x}|^2+constant $. However, this contradicts to the fact that $v\rightarrow -\infty$ as $q\rightarrow p_0$. Hence, $\langle -\mathcal{L}_\Sigma u,u\rangle>0$ for all non-trivial $u$.
\end{proof}

\section{Proof of Theorem \ref{main}}
\par\indent{In this section we use the mountain-pass theorem proved by J. Bernstein and L. Wang to prove Theorem \ref{main}.}
\begin{proof}[Proof of Theorem \ref{main}]
The existence result follows from \cite[Theorem 6.3]{7} and \cite[Proposition 3.3]{4}. Hence, we only need to show the uniqueness. Letting $\kappa =\sqrt{\frac{n+1}{2}}$, Theorem \ref{Guo} ensures the existence of an $\epsilon=\epsilon(n)$ so that if $\lambda[\mathcal{C}]<1+\epsilon$, then $|A_\Sigma|\leq \kappa$ for all self-expanders asymptotic to $\mathcal{C}$. Then by Lemma \ref{Poincare}, all self-expanders asymptotic to $\mathcal{C}$ are strictly stable. That is, $\mathcal{C}$ is a \emph{regular value} in the sense of \cite{3}.
\par{Let $\delta=\min\{\epsilon(n), \Lambda_n-1\}$. As an application of \cite[Theorem 1.3]{3}, $\Pi^{-1}(\mathcal{C})$ is a finite set, where $\Pi$ assigns each element in $\mathcal{ACH}^{k,\alpha}_n$ to the trace at infinity.}
\par{Now, let us argue by contradiction. Suppose there were two self-expanders $\Sigma_1$ and $\Sigma_2$ both asymptotic to $\mathcal{C}$. Following Theorem \ref{constr}, we can produce two distinct self-expanders $\Sigma_\pm$ with $\Sigma_{-}\preceq\Sigma_i\preceq\Sigma_+$ for $i=1,2$. Applying \cite[Theorem 1.1]{5}, there is a self-expander $\Sigma_0\neq\Sigma_\pm$ with possibly codimension-7 singular set and $\Sigma_-\preceq\Sigma_0\preceq\Sigma_+$. Notice that Huisken's monotonicity formula tells us $\lambda[\Sigma_0]\leq \lambda[\mathcal{C}]<1+\delta\leq \Lambda_n$. Thus, by Lemma \ref{regul}, $\Sigma_0$ is actually smooth. Now, replace $\Sigma_\pm$ by $\Sigma_0$ and $\Sigma_-$ and iterate the preceding argument. So we produce as many self-expanders as we can. And this contradicts the fact that $\Pi^{-1}(\mathcal{C})$ is finite.}
\end{proof}

\section{Acknowledgements}
The author would like to thank Professor Jacob Bernstein for his patient and generous help.


\begin{thebibliography}{9}
\bibitem{1}
        S.B. Angenent, T. Ilmanen, and D.L. Chopp, \textit{A computed example of non-uniqueness of mean curvature flow in $\mathbb{R}^3$}, Commun. in Partial Differential Equations 20 (1995), no. 11-12, 1937-1958.
\bibitem{2}
        J. Bernstein and L. Wang, \textit{The space of asymptotically conical self-expanders of mean curvature flow}, Preprint. Available at \href{https://arxiv.org/abs/1712.04366}{https://arxiv.org/abs/1712.04366}.
\bibitem{3}
        J. Bernstein and L. Wang, \textit{An integer degree for asymptotically conical self-expanders}, Preprint. Available at \href{https://arxiv.org/abs/1807.06494}{https://arxiv.org/abs/1807.06494}.
\bibitem{4}
        J. Bernstein and L. Wang, \textit{Smooth compactness for spaces of asymptotically conical sel-expanders of mean curvature flow}, Int. Math. Res. Not., to appear. Available at \href{https://arxiv.org/abs/1804.09076}{https://arxiv.org/abs/1804.09076}.
\bibitem{5}
        J. Bernstein and L. Wang, \textit{A mountain-pass theorem for asymptotially conical self-expanders}, Preprint. Available at \href{https://arxiv.org/abs/2003.13857}{https://arxiv.org/abs/2003.13857}.
\bibitem{6}
        J. Bernstein and L. Wang, \textit{Topological uniqueness for self-expanders of small entropy}, Preprint. Available at \href{https://arxiv.org/abs/1902.02642}{https://arxiv.org/abs/1902.02642}.
\bibitem{14}
        Otis Chodosh, Kyeongsu Choi, Christos Mantoulidis, and Felix Schulze, \textit{Mean curvature flow with generic initial data}. Available at \href{https://arxiv.org/abs/2003.14344}{https://arxiv.org/abs/2003.14344}.
\bibitem{12}
        T.H. Colding and W.P. Minicozzi $\RN{2}$, \textit{Generic mean curvature flow $\ToRomanEmpire{1}$; generic singularities}, Ann. of Math. (2) 175 (2012), no. 2, 755-833.
\bibitem{7}
        Q. Ding, \textit{Minimal cones and self-expanding solutions for mean curvature flows}, Math. Ann. 376 (2020), no. 1-2, 359-405.
\bibitem{8}
        K. Ecker and G. Huisken, \textit{Mean curvature evolution of entire graphs}, Ann. of Math. (2) 130 (1989), no. 3, 453-471.
\bibitem{13}
        S.-H. Guo, \textit{Asymptotic behavior and stability of mean curvature flow with a conical end}, Adv. Math (2020), \href{https://doi.org/10.1016/j.aim.2020.107408}{https://doi.org/10.1016/j.aim.2020.107408}
\bibitem{9}
        G. Huisken, \textit{Asymptotic behavior for singularities of the mean curvature flow}, J. Differential Geom. 31 (1990), no. 1, 285-299.
\bibitem{10}
        L. Simon, Lectures on geometric measure theory, Proceedings of the Centre for Mathematical Analysis, Australian National University No. 3, Canberra, 1983.
\bibitem{11}
        B. White, \textit{Stratification of minimal surfaces, mean curvature flows, and harmonic maps}, J. Reine Angew. Math. 488 (1997), 1-35.

\end{thebibliography}
\end{document}